\newtheorem{thm}{Theorem}[section]
\newtheorem{rk}[thm]{Remark}
\newtheorem{prop}[thm]{Proposition}   
\newtheorem{clly}[thm]{Corollary}
\newtheorem{lemma}[thm]{Lemma}
\newtheorem{defi}[thm]{Definition}
\newtheorem{exam}[thm]{Example}
\newtheorem{nota}[thm]{Notation}
\newcommand{\bpf}{\begin{proof}}
\newcommand{\epf}{\end{proof}}
\newcommand{\bprop}{\begin{prop}}
\newcommand{\eprop}{\end{prop}}
\newcommand{\bthm}{\begin{thm}}
\newcommand{\ethm}{\end{thm}}
\newcommand{\brk}{\begin{rk}}
\newcommand{\erk}{\end{rk}}
\newcommand{\bdefi}{\begin{defi}}
\newcommand{\edefi}{\end{defi}}
\newcommand{\blemma}{\begin{lemma}}
\newcommand{\elemma}{\end{lemma}}
\newcommand{\bclly}{\begin{clly}}
\newcommand{\eclly}{\end{clly}}
\newcommand{\bnota}{\begin{nota}}
\newcommand{\enota}{\end{nota}}
\newcommand{\id}{\mbox{id}}
\newcommand{\cstar}{\mbox{$C^*$}}
\newcommand{\CC}{\mathbb{C}} 
\newcommand{\NN}{\mathbb{N}}
\newcommand{\ZZ}{\mathbb{Z}}
\newcommand{\cl}{\mathcal{L}}
\newcommand{\lan}{\langle}
\newcommand{\lran}{\rangle_{{\!\scriptscriptstyle{L}}}}
\newcommand{\rran}{\rangle_{{\!\scriptscriptstyle{R}}}}
\newcommand{\tzy}{{T^{\scriptscriptstyle{Z}}_y}}
\title{Toeplitz Matrices acting on the $\ell^2$-space of an imprimitivity bimodule}
\author{Beatriz Abadie} 
\thanks{{\em Keywords:} Imprimitivity bimodules, Toeplitz matrices}
\address{Centro de Matem\'atica. Facultad de Ciencias. Igu\'a 4225, CP 11 400, Montevideo, Uruguay.}
\email{abadie@cmat.edu.uy}
\subjclass[2010]{Primary 46L08, Secondary 46L55.}
\begin{document}

\begin{abstract}
 We give a definition of Toeplitz matrix acting on the $\ell^2$-space of an imprimitivity bimodule $X$ over a $C^*$-algebra $A$. 
 We characterize the set of Toeplitz matrices 
 as the closure in a certain topology of the image of the left regular representation of the crossed product $A\rtimes X$.
\end{abstract}

\maketitle

If $X$ is an imprimitivity bimodule over a \cstar-algebra $A$, the right Hilbert $A$-module $\ell^2(X)=\oplus_{n\in\ZZ} X^{\otimes n}$ provides a natural generalization of the Hilbert space 
$\ell^2(\ZZ).$
The main problem in generalizing the definition of Toeplitz matrix to this setting is that the $ij^{th}$ entry of the  matrix associated to an operator acting on $\ell^2(X)$ is an adjointable operator 
from $X^{\otimes j}$ to  
$X^{\otimes i}$, so one has to make sense of the condition $[T]_{ij}=[T]_{i-1\ j-1}$ characterizing classical Toeplitz matrices. 
However, it seems natural to identify the creation operators 
$T^j_\eta:X^{\otimes j}\rightarrow X^{\otimes i}$ and $T^{j-1}_\eta:X^{\otimes j-1}\rightarrow X^{\otimes i-1}$, given by tensoring by $\eta\in X^{\otimes i-j}$.
By making use of the notion of multiplier of an imprimitivity bimodule discussed by Echterhoff and Raeburn in \cite{er}, we show that there is a unique A-bimodule isomorphism from the set
of 
right adjointable maps
$\cl_R(X^{\otimes j}, X^{\otimes i})$ to $\cl_R(X^{\otimes j-1}, X^{\otimes i-1})$ that takes $T^j_\eta$  to $T^{j-1}_\eta$. This result enables us to define Toeplitz matrices acting on 
$\ell^2(X)$.

We then turn to the characterization of those operators on $\ell^2(X)$ that are associated to Toeplitz matrices. In order to do that, we consider the crossed product $A\rtimes X$ 
discussed in \cite {aee} and its left regular representation $\Lambda$ on $\ell^2(X)$, a canonical representation that agrees with the usual left regular 
representation for crossed products by an automorphism. Let  $\sigma$ be the initial topology  on $\cl(\ell^2(X))$  induced by the family of seminorms 
$\mathcal{F}=\{p_v:v\in X^{\otimes j}, j\in \ZZ\}$, where
$p_v(T)=\|T(v\delta_j)\|$.  Theorem \ref{tlrep} characterizes the set of Toeplitz matrices 
 as the $\sigma$-closure of the image of the left regular representation  on $\ell^2(X)$ of the crossed product $A\rtimes X$ defined in \cite{aee} .

\section{Preliminaries}
\label{prelim}

 In this section we briefly expose the background  on imprimitivity bimodules and their multipliers. 
 We refer the reader to, for instance,  
 \cite{rw}, \cite{la}, or \cite{irep} for further results and 
 constructions.

 Let $A$ be a \cstar-algebra. A {\em right inner product $A$-module} is a complex vector space $X$ that is a right $A$-module satisfying the condition
 \[\lambda (xa)=(\lambda x)a=x(\lambda a) \text{ for all } \lambda\in \CC, x\in X, \text{ and }a\in A,\]
 together with a pairing $\langle\ ,\ \rran:X\times X\longrightarrow A\text{ such that } $
 \begin{enumerate}
 \item $\langle x,\lambda y +\mu z\rran =\lambda \langle x,y \rran + \mu \langle x,z \rran,$
 \item $\langle x,ya\rran =\langle x,y\rran a,$
 \item  $\langle y, x\rran =  \langle x, y\rran ^*,$
 \item  $\langle x, x\rran \geq 0$ 
 \item $\langle x, x\rran = 0$ only if $x=0$. 
 
\end{enumerate}
 
 A right Hilbert $A$-module consists of an inner product $A$-module $X$ that is complete in the norm
 \[\|x\|=\|\langle x, x\rran \|^{1/2}.\]
 A right Hilbert $A$-module $X$ is said to be full if the ideal 
 \[\text{span}\{\langle x, y\rran:x,y\in X\}\]
 is dense in $A$.
 
 Let $X$ and $Y$ be right Hilbert $A$-modules. A function $T:X\longrightarrow Y$ is {\em adjointable} if there is a function $T^*:Y\longrightarrow X$ such that
 \[\langle Tx,y\rran=\langle x,T^*y\rran \text{ for all }x\in X, y\in Y.\]
 Adjointable maps turn out to be bounded linear $A$-module maps. Throughout this work, we will denote with  $\cl_R(X,Y)$ the set of right adjointable operators 
from $X$ to $Y$.

Left Hilbert $A$-modules are defined analogously, by considering left $A$-modules and by replacing conditions 1) and 2) above with
\begin{align*}
&1') \ \langle \lambda x + \mu y , z\lran =\lambda \langle x,z \lran + \mu \langle y,z \lran,\\
&2') \ \langle ax,y\lran =a\langle x,y\lran. 
\end{align*}
If $X$ and $Y$ are left Hilbert $A$-modules, left adjointable maps from $X$ to $Y$ are defined analogously to the right case. The set of left 
 adjointable operators from $X$ to $Y$ will be denoted by $\cl_L(X,Y)$.  

Let $A$ and $B$ be \cstar-algebras. An $A-B$ bimodule $X$ is an $A-B$ imprimitivity bimodule if it is a full left Hilbert $A$-module and a full right Hilbert $B$-module such that
\[\langle x, y \lran z= x\langle y, z\rran,\]
for all  $x,y,z\in X,$
There is no ambiguity regarding the norm in $X$ in that case, since (see, for instance, \cite[Remark 1.9]{bms}) 
\[\|\langle x,x\lran\|=\|\langle x,x \rran\|\]
for all $x\in X$.
\begin{rk}
As was shown in \cite[Remark 1.9]{bms}, if $X$ is an $A-B$ imprimitivity bimodule, then
\begin{equation}
 \label{adjointable}
  \langle xb, y \lran=\langle x, yb^* \lran \text{ and } \langle ax, y \rran=\langle x, a^*y \rran.
\end{equation}
for all  $x,y \in X, a\in A,$ and $b\in B$.
\end{rk}
First notice that, if $z\in X$, 
\begin{align*}
 \langle x,yb^*\lran z &= x\langle yb^*,z \rran\\
 &=x(\langle z,yb^*\rran)^*\\
 &=x(\langle z,y\rran b^*)^*\\
 &=xb\langle y,z\rran\\
 &=\langle xb,y\lran z.
\end{align*}
Thus, if $a=\langle x,yb^*\lran-\langle xb,y\lran$, then $az=0$ for all $z\in X$. 

Finally,
\[aa^*=a(\langle yb^*, x\lran-\langle y, xb\lran)=\langle ayb^*, x\lran-\langle ay, xb\lran=0,\]
which shows that $a=0$.
An analogous reasoning proves the second equation in (\ref{adjointable}).

The {\em dual of an $A-B$ imprimitivity bimodule} $X$ was defined in \cite[6.17]{irep}  as the $B-A$ imprimitivity bimodule $\tilde{X}$ consisting of  the conjugate vector space of $X$   
with the structure given by
\begin{gather*}
b \tilde{x}=\widetilde{xb^*}, \tilde{x} a= \widetilde{a^*x},\ 
\langle \tilde{x}, \tilde{y}\lran = \langle x,y\rran,\ \langle \tilde{x}, \tilde{y} \rran = \langle x,y\lran,
\end{gather*}
where $a\in A$, $b\in B$, and  $\tilde x$ denotes the element $x\in X$ viewed as an element of the dual bimodule ${\tilde X}$. 

Given an $A-B$ imprimitivity bimodule $X$ and a $B-C$ imprimitivity bimodule $Y$, the {\em tensor product} $X\otimes Y$ is the $A-C$ imprimitivity bimodule obtained by completing
 the left inner product $A$-module and right inner product $C$-module  consisting of the algebraic tensor product $X\otimes_{B\ alg} Y$ with  inner products given on simple tensors by
 \[\langle x_1\otimes y_1,  x_2\otimes y_2\lran= \langle x_1\langle y_1,y_2\lran, x_2\lran 
 \text{ and } \langle x_1\otimes y_1,  x_2\otimes y_2\rran=\langle y_1,\langle x_1,x_2\rran y_2\rran.\]

We now recall the notions of multiplier algebra and multiplier bimodule.
A {\em multiplier $m$ of a \cstar-algebra} $A$ is a pair $m=(L, R)$, where $L$ and $R$ are linear maps from $A$ to itself such that
\[L(ab)=L(a)b,\  R(ab)=aR(b),\text{ and } aL(b)=R(a)b,\]
for all $a,b\in A$. Every $a\in A$ gives rise to a multiplier $(L_a, R_a)$, where $L_a$ and $R_a$ are left and right multiplication by $a$, respectively.
The set $M(A)$ of multipliers of $A$ can be endowed with the structure of a \cstar-algebra \cite[2.1.5]{mu} in which $A$ sits as an ideal via the identification mentioned above. 
Besides (\cite[3.1.8]{mu}), 
if $B$ is a \cstar-algebra containing $A$ as an ideal, there is a unique homomorphism from $B$ to $M(A)$ that is the identity on $A$. 

In \cite[Definition 1.1]{er}, Echterhoff and Raeburn define the notion of the multiplier bimodule of an imprimitivity bimodule. A {\em multiplier of an $A-B$ imprimitivity bimodule} $Y$ 
consists of 
a pair $m=(m_A,m_B)$, where \mbox{$m_A:A\longrightarrow Y$} is A-linear, $m_B:B\longrightarrow Y$ is $B$-linear, and
\[m_A(a)b=am_B(b),\]
for all $a\in A$, $b\in B$.

Like in the case of \cstar-algebras, $y\in Y$ can be viewed as the multiplier $(y_A,y_B)$, where $y_A(a)=ay$ and $y_B(b)=yb$, for all $a\in A$ and $b\in B.$ By means of this identification,
the set $M(Y)$ of multipliers of $Y$ can be made into an $A-B$ bimodule, by setting
\[am=m_A(a)\text{ and } mb=m_B(b), \]
for all $a\in A$, $b\in B$, and $m=(m_A,m_B)\in M(Y).$

 Proposition 1.2 in \cite{er}  characterizes $M(Y)$ up to isomorphism  
as the $A-B$ bimodule satisfying the following two properties:
\begin{enumerate}
\item $M(Y)$ contains a copy of the bimodule $Y$ such that $AM(Y)\subset Y$ and $M(Y)B\subset Y$.
\item If $M$ is an $A-B$ bimodule satisfying (1), then there is a unique $A-B$ bimodule homomorphism from $M$ to $M(Y)$ that is the identity on $Y$.
\end{enumerate}
\section{Adjointable maps as multipliers}
\label{mbim}

% \begin{rk}
% \label{phib}
%  If $\phi\in \mathcal{L}_R(B,Y)$ and $b\in B$, then $\phi\cdot b=T^B_{\phi(b)}$. In fact, if $c\in B$, then
% \[(\phi\cdot b)(c)=\phi(bc)=\phi(b)c=T^B_{\phi(b)}(c).\]
% \end{rk}

 Let $Y$ and $Z$  be an $A-B$ and a $B-C$ imprimitivity bimodule, respectively. For $y_0\in Y$ and $z_0\in Z$,  we denote by $T^Z_{y_0}\in \cl_R(Z,Y\otimes Z)$ and 
 $R^Y_{z_0}\in\cl_L(Y,Y\otimes Z)$)
 the creation operators defined by
 \begin{equation}
 \label{creation}
 T^Z_{y_0}(z)=y_0\otimes z \text{ and } R^Y_{z_0}(y)=y\otimes z_0. 
  \end{equation}

It is well known, and easy to check, that the maps $y\mapsto T^Z_y$ and $z\mapsto R^Y_z$ are isometric, and that
 \[(T^Z_{y_0})^*(y\otimes z)=\langle y_0,y\rran z \quad \text{ and } \quad (R^Y_{z_0})^*(y\otimes z)=y\langle z,z_0\lran,\]
 for all $y,y_0\in Y$ and $z,z_0\in Z$.

 We will also be making use of the equation
 \begin{equation}
 \label{rstar}
  (R^Y_z)^*(\eta)\otimes w=\eta\langle z,w\rran,
 \end{equation}
for all $z,w\in Z$ and $\eta\in Y\otimes Z$.

By virtue of the continuity of both sides in Equation \ref{rstar}, it suffices to check it for $\eta$ in the algebraic tensor product  $Y\otimes_{alg} Z$. 

If $\eta=\sum_iy_i\otimes z_i$, then 
\begin{gather*}
(R^Y_{z})^*(\eta)\otimes w= \sum_iy_i\langle z_i,z\lran \otimes w=\sum_iy_i\otimes \langle z_i,z\lran w=\\
 =\sum_iy_i\otimes  z_i\langle z, w\rran=\eta \langle z, w\rran.
\end{gather*}

\begin{nota}
 \label{laabim}
 Let $Y$ and $Z$ be an $A-C$ and a $B-C$ imprimitivity bimodule, respectively. Throughout this work, we will view $\cl_R(Z,Y)$ as an $A-B$ bimodule for the actions
 \[(a\cdot\phi)(z)=a\phi(z)\qquad \text{ and }\qquad (\phi\cdot b)(z)=\phi(bz),\]
 for $a\in A$, $b\in B$, $z\in Z$, and $\phi\in \cl_R(Z,Y)$.
\end{nota}

Proposition 1.3 in \cite{er} identifies the multiplier bimodule $M(Y)$ of an $A-B$ imprimitivity bimodule $Y$ with $\mathcal{L}_R(B,Y)$, with the $A-B$  bimodule structure 
established in Notation \ref{laabim} and the copy of $Y$  obtained via the map 
$y\mapsto T^B_y\in \mathcal{L}_R(B,Y)$, where  $T_y^B(b)=yb$ for $b\in B$ and $y\in Y$. The following theorem generalizes that result, which follows when one takes $Z=B$.
\begin{thm}
 \label{mofy}
 Let $Y$ and $Z$  be an $A-B$ and a $B-B$ imprimitivity bimodule, respectively.
 Then the $A-B$ bimodule $\cl_R(Z,Y\otimes Z)$, provided with the copy of $Y$ given by $T_Y=\{\tzy:y\in Y\}$,   is isomorphic to $M(Y)$.
\end{thm}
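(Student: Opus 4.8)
The plan is to deduce the theorem from the special case $Z=B$, which is Proposition~1.3 of \cite{er}. That proposition identifies $M(Y)$, as an $A$--$B$ bimodule together with its distinguished copy of $Y$, with $\cl_R(B,Y)$ carrying the copy $\{T^B_y:y\in Y\}$. So it is enough to construct an $A$--$B$ bimodule isomorphism $\Phi\colon\cl_R(B,Y)\longrightarrow\cl_R(Z,Y\otimes Z)$ with $\Phi(T^B_y)=T^Z_y$ for every $y\in Y$. The natural choice is ampliation by $Z$: identifying $Z$ with $B\otimes Z$ via $b\otimes z\mapsto bz$, set $\Phi(\psi)=\psi\otimes\id_Z$, that is $\Phi(\psi)(bz)=\psi(b)\otimes z$, extended by linearity and continuity. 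That $\psi\otimes\id_Z$ is a well-defined adjointable operator, with $(\psi\otimes\id_Z)^*=\psi^*\otimes\id_Z$ and $\|\psi\otimes\id_Z\|\le\|\psi\|$, is a standard fact about tensoring adjointable maps of Hilbert modules, so $\Phi(\psi)\in\cl_R(Z,Y\otimes Z)$.

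Next come the routine verifications. Evaluating on elementary tensors, $\Phi(T^B_y)(bz)=T^B_y(b)\otimes z=(yb)\otimes z=y\otimes bz=T^Z_y(bz)$, so $\Phi$ carries $\{T^B_y\}$ onto $T_Y$. For the module structures of Notation~\ref{laabim} one checks $\Phi(a\cdot\psi)(bz)=a\psi(b)\otimes z=a\bigl(\Phi(\psi)(bz)\bigr)$ and $\Phi(\psi\cdot b_0)(bz)=\psi(b_0b)\otimes z=\Phi(\psi)\bigl((b_0b)z\bigr)=\bigl(\Phi(\psi)\cdot b_0\bigr)(bz)$, so $\Phi$ is $A$--$B$ bilinear. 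It is injective: if $\psi(b)\otimes z=0$ for all $z\in Z$, then $\langle z,\langle\psi(b),\psi(b)\rran z\rran=0$ for all $z$, so the positive element $\langle\psi(b),\psi(b)\rran\in B$ annihilates $Z$; since the left action of $B$ on an imprimitivity bimodule is faithful, $\langle\psi(b),\psi(b)\rran=0$, whence $\psi(b)=0$ and $\psi=0$.

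The real point is surjectivity of $\Phi$, and this is where I expect the main work to lie; it is exactly the step that uses that $Z$ is an \emph{imprimitivity} bimodule, not merely a Hilbert module. Let $\phi\in\cl_R(Z,Y\otimes Z)$. Since $Z$ is full as a left Hilbert $B$-module, $B_0=\mathrm{span}\{\langle u,v\lran:u,v\in Z\}$ is dense in $B$; define $\psi_0\colon B_0\to Y$ by $\psi_0(\langle u,v\lran)=(R^Y_v)^*\bigl(\phi(u)\bigr)$. Using Equation~\ref{rstar}, the imprimitivity identity $u\langle v,w\rran=\langle u,v\lran w$, and the right $B$-linearity of $\phi$, one obtains, for all $w\in Z$,
\[
\psi_0(\langle u,v\lran)\otimes w=(R^Y_v)^*(\phi(u))\otimes w=\phi(u)\langle v,w\rran=\phi\bigl(\langle u,v\lran w\bigr),
\]
that is, $\psi_0(b_0)\otimes w=\phi(b_0w)$ for all $b_0\in B_0$ and $w\in Z$. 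Combined with the identity $\langle\eta\otimes w,\eta\otimes w\rran=\langle w,\langle\eta,\eta\rran w\rran$ in $Y\otimes Z$ and the faithfulness of the left $B$-action on $Z$ — which converts control of $\|\eta\otimes w\|$ over varying $w$ into control of $\|\eta\|$ — this shows at once that $\psi_0$ is well defined (independent of the chosen expression of $b_0$) and that $\|\psi_0(b_0)\|\le\|\phi\|\,\|b_0\|$. Hence $\psi_0$ extends to a bounded right $B$-module map $\psi\colon B\to Y$ with $\psi(b)\otimes w=\phi(bw)$ for all $b\in B$, $w\in Z$. That $\psi$ is adjointable, hence lies in $\cl_R(B,Y)$, follows from a short pairing computation against $\phi^*$ on elementary tensors; alternatively one avoids this point altogether by observing that $\phi\mapsto\phi\otimes\id_{\tilde Z}$, read through the canonical isomorphisms $Z\otimes\tilde Z\cong B$ and $Y\otimes Z\otimes\tilde Z\cong Y$, is a two-sided inverse of $\Phi$ (this is just the invertibility of $Z$ in $\mathrm{Pic}(B)$). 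Finally $\Phi(\psi)$ and $\phi$ agree on the dense subspace spanned by the vectors $bw$ ($b\in B$, $w\in Z$), so $\Phi(\psi)=\phi$, and this finishes the argument.
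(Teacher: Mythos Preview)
Your proof is correct and shares its central ingredient with the paper: both arguments hinge on the ampliation map $\psi\mapsto\psi\otimes\id_Z$ from $\cl_R(B,Y)$ to $\cl_R(Z,Y\otimes Z)$ (the paper calls it $H$) and on the formula $(R^Y_v)^*(\phi(u))$ for recovering an element of $Y$ from $\phi\in\cl_R(Z,Y\otimes Z)$ and $b=\langle u,v\lran$. The organization differs, however. The paper first verifies that $M=\cl_R(Z,Y\otimes Z)$ satisfies $AM\subset T_Y$ and $MB\subset T_Y$, then invokes the universal property of $M(Y)$ to obtain the map $J\colon M\to M(Y)$ abstractly, and finally checks $JH=\id$ (via uniqueness in the universal property) and $HJ=\id$ (by direct computation). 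You instead bypass the universal property entirely: you construct $\Phi$ explicitly, prove injectivity by faithfulness of the left $B$-action on $Z$, and build a preimage $\psi$ by hand. Your route is slightly more self-contained, while the paper's use of the universal property spares it the well-definedness and norm estimates you carry out. Your parenthetical alternative---tensoring with $\id_{\tilde Z}$ and using $Z\otimes\tilde Z\cong B$---is a genuinely cleaner inverse than either your main argument or the paper's, since it delivers an adjointable map in $\cl_R(B,Y)$ automatically and makes the ``invertibility of $Z$ in $\pic(B)$'' heuristic precise; this also neatly sidesteps the adjointability of $\psi$, which in your main line is the one step left somewhat implicit.
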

\begin{proof}
 Let $M$ denote the $A-B$ bimodule $\cl_R(Z,Y\otimes Z)$. 
 First note that the map $y\rightarrow \tzy$  is an $A-B$ bimodule homomorphism: 
\[(T_y^Z\cdot b)(z)=T^Z_y(bz)=y\otimes bz=yb\otimes z=T^Z_{yb}(z),\]
and
\[(a\cdot T^Z_y)(z)=aT^Z_y(z)=a y\otimes z=T^Z_{ay}(z),\]
for all $a\in A$, $b\in B$, $y\in Y$, and $z\in Z$.

Therefore, we can identify the $A-B$ bimodule $Y$ 
 with the closed $A-B$ sub-bimodule $T_Y$ of $M$.

 We now show that $AM\subset T_Y$. Let $a=\langle u\otimes v,u'\otimes v'\rangle_L$, where $u,u'\in Y$ and $v,v'\in Z$.
 
 Then
 \begin{align*}
 (a\cdot\phi)(z)&= \langle u\otimes v, u'\otimes v'\lran\phi(z)\\
 &=u\otimes v\langle u'\otimes v',\phi(z)\rran\\
 &=u\otimes v\langle \phi^*(u'\otimes v'),z \rran\\
  &=u\otimes \langle v, \phi^*(u'\otimes v')\lran z\\
  &=T_{u\langle v,\phi^*(u'\otimes v')\lran}^{\scriptscriptstyle{Z}}(z),
\end{align*}
for all $z\in Z$. Therefore, $a\cdot\phi\in T_Y$. 

It follows that the set $A_0:=\{a\in A: a\cdot \phi\in T_Y\}$ is dense in $A$. Since 
$T_Y$ is closed in $M$ and the action of $A$ on $M$ is continuous, we conclude that $A_0=A$. 

We next show that $MB\subset T_Y$. Let $b=\langle u,v \lran$, with $u,v\in Z$, and let $\phi\in M$. If $z\in Z$, then
\[ (\phi\cdot b)(z)=\phi\big(\langle u,v \lran z\big)=\phi\big( u\langle v,  z\rran \big)=\phi(u)\langle v,z\rran.\] 
If $\phi(u)=\sum_{i=1}^n y_i\otimes z_i$, for $y_i\in Y$, $z_i\in Z$, $i=1,\cdots n$, then, by the equation above, 
\begin{equation}
\label{foreta}
(\phi\cdot b)(z) =\big(\displaystyle{\sum_{i=1}^n} y_i\otimes z_i\big)\langle v,z\rran=T^Z_{\sum_i y_i\langle z_i, v\lran}(z)=T^Z_{(R^Y_v)^*(\sum_i y_i\otimes z_i)}(z),
\end{equation}
where $R^Y_v$ is as in Equation (\ref{creation}).

We now show that 
\begin{equation}
 \label{phil}
\phi\cdot b=T^Z_{(R^Y_v)^*(\phi(u))}
\end{equation}
if $b=\langle u,v\lran$, for $u,v\in Z$.
Let  $\eta_k$ be a sequence in the algebraic tensor product $Y\otimes_{alg} Z$ converging to $\phi(u)$.
Then, as above,
\begin{align*}
 (\phi\cdot b)(z)&= \phi(u)\langle v,z\rran\\
 &=\lim_k\eta_k\langle v,z\rran\\
 &=\lim_k T^Z_{(R^Y_v)^*(\eta_k)}(z)\\
 &=T^Z_{(R^Y_v)^*(\phi(u))}(z).
\end{align*}

We have thus shown that $\phi\cdot \langle u,v \lran\in T_Y$, for all $u,v\in Z$. 
Now, a reasoning similar to that above shows that $(\phi\cdot b)\in T_Y$ for all $b\in B$.

The universal property of $M(Y)$ and the identification of $M(Y)$ with $\mathcal{L}_R(B,Y)$ mentioned above yield now an $A-B$ bimodule homomorphism
\[J:M\longrightarrow M(Y)\]
such that $J(\tzy)=T^B_y$ for all $y\in Y$.

Let  $H:M(Y)\longrightarrow M$ be defined by 
\[[H(\phi)](bz)=\phi(b)\otimes z,\]
for all $\phi\in M(Y)$, $b\in B$, and $z\in Z$. Notice that the definition above makes sense, since $H(\phi)$ is the composition of $\phi\otimes \id_Z$ and the canonical isomorphism between $Z$ and $B\otimes Z$.

Besides, $H$ is an $A-B$ bimodule homomorphism:
\[[H(a\cdot \phi)](bz)=(a\cdot \phi)(b)\otimes z=a \phi(b)\otimes z= [a\cdot H(\phi)](z), \]
and
\[[H(\phi\cdot c)](bz)=(\phi\cdot c)(b)\otimes z=\phi(cb)\otimes z=[H(\phi)\cdot c)](bz),\]
for all $\phi\in M(Y)$, $a\in A$, $b,c\in B$, and $z\in Z$. 

We now show that $H=J^{-1}$. In fact, we have that 
\[[H(T^B_y)](bz)=T^B_y(b)\otimes z=yb\otimes z=y\otimes bz=T^Z_y(bz),\]
for all $y\in Y$, $b\in B,$ and  $z\in Z$. That is, $H(T^B_y)=T^Z_y$ for all $y\in Y$. 

It now follows that $JH:M(Y)\longrightarrow M(Y)$ is an  $A-B$ bimodule homomorphism that is the 
identity on $Y$. We conclude from the universal property of $M(Y)$ that $JH=$Id$_{M(Y)}$.

Finally, we prove that $HJ=$Id$_M$.  First recall that, by  Equation (\ref{phil}), 
\[\phi\cdot b=\sum_{i=1}^n T^Z_{(R^Y_v)^*(\phi(u))},\] if $\phi\in M$ and $b=\langle u,v\lran$, where $u,v\in Z$.

Therefore,  if $c\in B$, then
\begin{equation}
 (J\phi)(bc)=[(J\phi)\cdot b](c)=[J(\phi\cdot b)](c)=\sum_{i=1}^n T^B_{(R^Y_v)^*(\phi(u))}(c).
\end{equation}
Then, by Equation (\ref{rstar}), for $c\in B$ and $z\in Z$,
\begin{align*}
(HJ\phi)(bcz)&=(J\phi)(bc)\otimes z\\
&= T^B_{(R^Y_v)^*(\phi(u))}(c)\otimes z\\
&= \phi(u)\lan v, cz\rran\\
&= \phi(u\lan v, cz\rran)\\
&=\phi(\langle u, v\lran cz)\\
&=\phi(bcz).
\end{align*}
for all $c\in B$ and $z\in Z$. 

A standard continuity argument completes now the proof.

\end{proof}

\section{Toeplitz matrices}
\label{toepmat}

Let $X$ be an imprimitivity bimodule over a $\cstar$-algebra $A$. In this section we make use of Theorem \ref{mofy} in order to define Toeplitz matrices acting on $\ell^2(X)$. We then describe 
Toeplitz matrices in terms of the left regular representation of the crossed product $A\rtimes X$ discussed in \cite{aee}.

 As usual, if $k<0$, $X^{\otimes k}$ denotes the  Hilbert $\cstar$-bimodule $\tilde{X}^{\otimes -k}$, $\tilde{X}$ 
being 
the dual bimodule defined in \cite{irep}. If $\eta\in X^{\otimes k}$, we denote by $T^n_\eta$ the operator $T^{X^{\otimes n}}_\eta\in \mathcal{L}_R(X^{\otimes n},X^{\otimes n+k})$, 
where we make the usual identifications of $a\otimes x$ with $ax$, $x\otimes a$ with $xa$, $\tilde{x}\otimes y$ with $\langle x,y\rran $, and $x\otimes \tilde{y}$ with $\langle x,y\lran $.

By Theorem \ref{mofy}, there is a unique  $A-A$ bimodule isomorphism 
\[\alpha^{n,m}:\cl_R(X^{\otimes n}, X^{\otimes m})\longrightarrow \cl_R(X^{\otimes n-1}, X^{\otimes m-1})\]
such that $\alpha^{n,m}(T^n_\eta)= T^{n-1}_\eta$, for all $n,m\in \ZZ$ and $\eta\in X^{\otimes m-n}.$

We denote by $\ell^2(X)$ the right Hilbert $\cstar$-module over $A$ given by
\[\ell^2(X)=\bigoplus_{k=-\infty}^{+\infty} X^{\otimes k},\]
and by $\cl(\ell^2(X))$ the space of right adjointable operators on $\ell^2(X)$.

An operator $T\in \cl(\ell^2(X))$ can be represented by a matrix $[T]$, where $[T]_{ij}\in \cl_R(X^{\otimes j}, X^{\otimes i})$ is given by
\[[T]_{ij}=\Pi_iTE_j,\]
for the usual maps
$E_k:X^{\otimes k} \longrightarrow \ell^2(X)$ and  $\Pi_k:\ell^2(X) \longrightarrow X^{\otimes k},$
defined by $E_k(u)=u\delta_k$ and $\Pi_k f=f(k)$, for all $k\in \ZZ$. 

The automorphisms $\alpha^{n,m}$ defined above yield a natural definition of Toeplitz matrix.

\begin{defi}
 Let $T\in \cl_R(\ell^2(X))$. The matrix $[T]$ is said to be a Toeplitz matrix if $\alpha^{j,i}([T]_{ij})=[T]_{i-1\ j-1}$ for all $i,j\in \ZZ$.
\end{defi}
\begin{exam} {\bf Classical Toeplitz matrices}
\end{exam}  Let $X=\CC$ be the $\CC-\CC$ imprimitivity  bimodule obtained by letting $\CC$ act on itself with left and right multiplication, with inner products
 \[\langle \lambda, \mu \lran=\lambda \overline{\mu} \text{ and } \langle \lambda, \mu \rran= \overline{\lambda}\mu.\]
 Since conjugation identifies the $\CC-\CC$ imprimitivity  bimodules $C$ and $\tilde{C}$, $\ell^2(X)$ is the usual Hilbert space $\ell^2(\ZZ)$.
 
 Besides, $\cl_R(X^{\otimes n}, X^{\otimes m})\simeq \cl_R(\CC, \CC)\simeq \CC$ consists of left multiplication by complex numbers, 
 and $\alpha^{n,m}=\id_\CC$ for all $n,m\in \ZZ$. It follows that the matrix $[T]$ associated to an operator $T\in \cl_R(\ell^2(X))$ is a Toeplitz matrix if and only if 
 $[T]_{ij}=[T]_{i-1\ j-1}$ for all $i,j\in \ZZ$. That is, if and only if $[T]$, viewed as a an operator acting on the Hilbert space $\ell^2(\ZZ)$, is a Toeplitz matrix in the classical 
 sense.

\begin{exam}
\label{cp}
Given an $A-A$ imprimitivity bimodule $X$, the crossed product $A\rtimes X$ was defined in \cite[Definition 2.4]{aee}. Theorem 2.9 in \cite{aee} shows that $A\rtimes X$ is
the cross-sectional \cstar-algebra of a Fell bundle  with fibers $\{X^{\otimes n}:n\in \ZZ \}$. It follows from \cite[VIII.16.12]{fd} that $A\rtimes X$ acts on $\ell^2(X)$ 
via the  representation   $\Lambda$ (the left regular representation, following the terminology of \cite[2.3]{amen})  given  by 
\[[\Lambda_f(\eta)](l)=\sum_{k\in \ZZ} f(l-k)\otimes \eta(k),\]
for all $\eta\in\ell^2(X)$, $l\in \ZZ$, and all compactly supported cross-sections  $f\in A\rtimes X$.

Therefore,
\[[\Lambda_f]_{ij}=T^j_{f(i-j)},\]
and $[\Lambda_f]$ is a Toeplitz matrix. 
\end{exam} 
\begin{thm}
\label{tlrep} Let $\sigma$ be the initial topology on  ${\mathcal L}(\ell^2(X))$ induced by the family of seminorms $\mathcal{F}=\{p_v:v\in X^{\otimes j}, j\in \ZZ\}$, where
$p_v(T)=\|T(v\delta_j)\|$, and let $\Lambda$ the left regular representation defined in Example \ref{cp}.

If $R\in {\mathcal B}(\ell^2(X))$, then  $[R]$ is a Toeplitz matrix if and only if  $R\in \overline{\Lambda(A\rtimes Z)}^\sigma$. 
\end{thm}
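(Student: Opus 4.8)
The plan is to first replace the Toeplitz condition by an equivalent condition that only involves evaluating the matrix entries on simple tensors, and then to establish the two inclusions separately.

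\textbf{Reformulating the Toeplitz condition.} I would first extract from the proof of Theorem \ref{mofy} a concrete description of $\alpha^{n,m}$. Write $H_Z\colon M(X^{\otimes m-n})\longrightarrow\cl_R\bigl(Z,X^{\otimes m-n}\otimes Z\bigr)$ for the isomorphism built there, which (after identifying $M(X^{\otimes m-n})$ with $\cl_R(A,X^{\otimes m-n})$) is characterized by $[H_Z(\psi)](az)=\psi(a)\otimes z$; take $Z=X^{\otimes n}$ and $Z=X^{\otimes n-1}$. The composite $H_{X^{\otimes n-1}}\circ H_{X^{\otimes n}}^{-1}$ is an $A$–$A$ bimodule isomorphism sending $T^n_\eta$ to $T^{n-1}_\eta$, so by the uniqueness clause it equals $\alpha^{n,m}$. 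Since each $X^{\otimes k}$ is a non-degenerate left $A$-module, so that $e_\mu w\to w$ along an approximate identity $(e_\mu)$ of $A$, a short computation with this description together with the associativity of the tensor product yields
\[
\phi(w\otimes x)=\alpha^{n,m}(\phi)(w)\otimes x\qquad(w\in X^{\otimes n-1},\ x\in X,\ \phi\in\cl_R(X^{\otimes n},X^{\otimes m})).
\]
Since moreover $u\otimes x=u'\otimes x$ for all $x\in X$ forces $u=u'$ (apply the adjoint of a creation operator and invoke fullness), this identity shows that $[R]$ is a Toeplitz matrix if and only if
\[
[R]_{ij}(w\otimes x)=[R]_{i-1\,j-1}(w)\otimes x\qquad\text{for all }i,j\in\ZZ,\ w\in X^{\otimes j-1},\ x\in X,
\]
and it is this coordinatewise form of the condition that I would use throughout.

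\textbf{Every element of $\overline{\Lambda(A\rtimes X)}^\sigma$ is Toeplitz.} Since the compactly supported cross-sections form a norm-dense $*$-subalgebra of $A\rtimes X$, $\Lambda$ is norm-continuous, and $\sigma$ is coarser than the norm topology (as $p_v(T)\le\|T\|\,\|v\|$), we have $\overline{\Lambda(A\rtimes X)}^\sigma=\overline{\{\Lambda_f:f\in A\rtimes X\text{ compactly supported}\}}^\sigma$; and for such $f$ the formula $[\Lambda_f]_{ij}=T^j_{f(i-j)}$ makes the coordinatewise Toeplitz condition immediate. Thus it suffices to check that the set $\mathcal T$ of Toeplitz operators is $\sigma$-closed. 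For fixed $i,j,w,x$ the maps $R\mapsto[R]_{ij}(w\otimes x)=\Pi_iR\bigl((w\otimes x)\delta_j\bigr)$ and $R\mapsto[R]_{i-1\,j-1}(w)\otimes x=\bigl(\Pi_{i-1}R(w\delta_{j-1})\bigr)\otimes x$ are $\sigma$-to-norm continuous, because evaluation at a fixed element of $\ell^2(X)$ is $\sigma$-continuous while $\Pi_i$ and $u\mapsto u\otimes x$ are bounded; hence $\mathcal T$ is an intersection of zero sets of continuous maps, so it is $\sigma$-closed.

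\textbf{Every Toeplitz operator lies in $\overline{\Lambda(A\rtimes X)}^\sigma$.} Let $R$ be Toeplitz and set $m_n:=[R]_{n,0}\in\cl_R(A,X^{\otimes n})\cong M(X^{\otimes n})$. Iterating the identity above yields $[R]_{lj}=m_{l-j}\otimes\id_{X^{\otimes j}}$ under $X^{\otimes l-j}\otimes X^{\otimes j}=X^{\otimes l}$, i.e. $[R]_{lj}(v)=(m_{l-j}\otimes\id)(v)$ with $v$ read in $A\otimes X^{\otimes j}\cong X^{\otimes j}$. Fix an approximate identity $(e_\mu)$ of $A$. Given a finite set $F$ of vectors $v\delta_j$ and $\varepsilon>0$, one chooses $N$ with $\bigl\|\sum_{|l-j|>N}[R]_{lj}(v)\,\delta_l\bigr\|<\varepsilon/2$ for every $v\delta_j\in F$ (possible since $R(v\delta_j)\in\ell^2(X)$), then $\mu$ with $\|e_\mu v-v\|$ small enough that $\bigl\|\sum_{|l-j|\le N}(m_{l-j}\otimes\id)(e_\mu v-v)\,\delta_l\bigr\|<\varepsilon/2$ for every $v\delta_j\in F$ (possible since only finitely many $l$ occur and the $m_n$ are bounded), and sets $f_{F,\varepsilon}:=\sum_{|n|\le N}\bigl(m_n(e_\mu)\bigr)\delta_n$, a compactly supported cross-section. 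Then the $l$-th component of $\Lambda_{f_{F,\varepsilon}}(v\delta_j)-R(v\delta_j)$ equals $(m_{l-j}\otimes\id)(e_\mu v-v)$ when $|l-j|\le N$ and $-[R]_{lj}(v)$ otherwise, so $\|\Lambda_{f_{F,\varepsilon}}(v\delta_j)-R(v\delta_j)\|<\varepsilon$ for every $v\delta_j\in F$. Ordering the pairs $(F,\varepsilon)$ by $(F,\varepsilon)\preceq(F',\varepsilon')$ iff $F\subseteq F'$ and $\varepsilon'\le\varepsilon$ makes $(\Lambda_{f_{F,\varepsilon}})$ a net converging to $R$ in $\sigma$, whence $R\in\overline{\Lambda(A\rtimes X)}^\sigma$.

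\textbf{Where the difficulty lies.} The crux is the first step: because $\sigma$-convergence gives no control over operator norms, one cannot simply push the bimodule isomorphism $\alpha^{n,m}$ through a $\sigma$-limit, and the concrete formula $\phi(w\otimes x)=\alpha^{n,m}(\phi)(w)\otimes x$ extracted from Theorem \ref{mofy} is precisely what makes the Toeplitz condition testable coordinatewise — hence $\sigma$-closed (second step) and equipped with the transparent symbol description $[R]_{lj}=m_{l-j}\otimes\id$ (third step). Once that formula is available the two inclusions are routine, the norm estimate in the last step being the only computation of any substance.
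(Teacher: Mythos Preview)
Your proof is correct and rests on the same ingredients as the paper's --- the creation-operator description of $\alpha^{n,m}$ coming from Theorem~\ref{mofy}, truncation of the symbol to finitely many diagonals, and an approximate identity of $A$ --- but you organise them differently. By first isolating the concrete identity $\phi(w\otimes x)=\alpha^{n,m}(\phi)(w)\otimes x$, you reduce the Toeplitz condition to a family of pointwise equalities; this lets you prove the forward inclusion in one stroke by exhibiting $\mathcal{T}$ as an intersection of zero-sets of $\sigma$-continuous maps, whereas the paper argues directly with a net $\Lambda_{f_d}\to R$, first treating the case where each $[R]_{ij}$ is a creation operator and then reducing the general case via $R\cdot a$. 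Likewise, for the reverse inclusion you read off the multiplier $m_n=[R]_{n,0}$ and approximate in a single step by $\sum_{|n|\le N}m_n(e_\mu)\delta_n$, while the paper splits into the special case $[R]_{ij}=T^j_{u(i-j)}$ (where one shows $u\in\ell^2(X)$ and truncates) and then reduces the general case via $R\cdot e_\lambda$. Your packaging is somewhat more economical and avoids the two-stage reductions; the paper's version, on the other hand, makes the link with $\ell^2(X)$ more visible by explicitly producing the element $u=\sum_k u_k\delta_k$ in the special case.
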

\begin{proof}
 Let $R\in \overline{\Lambda(A\rtimes Z)}^\sigma$.  Since the set of compactly supported cross-sections is dense in the norm in $A\rtimes X$, we may assume that $R$ is the 
 $\sigma$-limit of  a net $\{\Lambda_{f_d}\}$, where $f_d$ is a compactly supported cross-section 
 in $A\rtimes X$ for all $d$. We first assume that 
 $[R]_{ij}=T^j_{\eta_{ij}}$ for all $i,j\in \ZZ$ and  $\eta_{ij}\in X^{\otimes i-j} $.
 
 Then, if $v\in X^{\otimes j}$, 
 \[[R]_{ij}(v)=[R(v\delta_j)](i)=\lim_d[\Lambda_{f_d}(v\delta_j)](i)=\lim_d f_d(i-j)\otimes v.\]
 Therefore,
 \[\eta_{ij}\otimes v=\lim_d f_d(i-j)\otimes v,\]
for all $i,j\in \ZZ$ and $v\in X^{\otimes j}$.

Now, if $v\in X^{\otimes j}$ and  $w\in X$, then 
\[[R]_{i+1\ j+1}(v\otimes w)=\lim_{d} f_d(i-j)\otimes v\otimes w=\eta_{ij}\otimes v\otimes w=T^{j+1}_{\eta_{ij}}(v\otimes w). \]
  It follows that $\alpha^{j+1,i+1}([R]_{i+1\ j+1})=[R]_{ij}$ for all $i,j\in \ZZ$. Consequently, $[R]$ is a Toeplitz matrix.
  
  In the general case, since $R\cdot a$ is as above for all $a\in A$, then, for all $i,j\in \ZZ$,
  \[\alpha^{j,i}([R]_{ij}\cdot a)=\alpha^{j,i}([R\cdot a]_{ij})=[R\cdot a]_{i-1\ j-1}=[R]_{i-1\ j-1}\cdot a.\]
Since the maps $\alpha^{j,i}$ are $A-A$ bimodule homomorphisms, this shows that
\[\alpha^{j,i}([R]_{ij})\cdot a=[R]_{i-1\ j-1}\cdot a. \]
The result now follows from the fact that, if $\{e_{\lambda}\}$ is an approximate identity of $A$, then  $S(v)=\lim_{\lambda} (S\cdot e_{\lambda})(v)$ for all 
$S\in \mathcal{L}(X^{\otimes j}, X^{\otimes i})$, $v\in  X^{\otimes j}$, and $i,j\in \ZZ$.

We now turn to the converse statement. Let $[R]$ be a Toeplitz matrix. Assume first that $[R]$ is such that
\begin{equation}
\label{condab}
\text{ for all } k\in \ZZ \text{ there exists } u_k\in X^{\otimes k} \text{ such that } [R]_{ij}=T_{u(i-j)}^j.
\end{equation}

Set $u=\sum_k u_k\delta_k$. If $\{e_{\lambda}\}$ is an approximate identity of $A$, then, since $ue_{\lambda}=R(e_\lambda\delta_0)$,  $ue_\lambda\in \ell^2(X)$   and 
$\|ue_{\lambda}\|_2\leq \|R\|$ for all $\lambda$.
This implies that $u\in\ell^2(X)$.

Now, given $N\in \NN$, let $f_N\in A\rtimes Z$ be defined by $f_N=\sum_{|k|\leq N} u_k\delta _k$. We next show that $\Lambda_{f_N}$ converges to $R$ in the topology $\sigma$.

In fact, if $v\in X^{\otimes j}$, then 
\[(R-\Lambda_{f_N})(v\delta_j)=\sum_{|k|>N} (u_k\otimes v)\delta_{k+j}.\]
Therefore,
\begin{align*}
\|(R-\Lambda_{f_N})(v\delta_j)\|^2&=\|\sum_{|k|>N} \langle u_k\otimes v, u_k\otimes v\rran\|\\
&=\|\sum_{|k|>N} \langle  v, \langle u_k, u_k\rran v\rran\|\\
&=\|\langle  v, \sum_{|k|>N}\langle u_k, u_k\rran v\rran\|\\
&\leq \|\sum_{|k|>N} \langle u_k, u_k\rran\|\|v\|^2< \epsilon
\end{align*}
from some $N$ on.

For the general case, let  $\{e_\lambda\}$ be an approximate identity of $A$.  For each $\lambda$, $[R\cdot e_\lambda]$ is a Toeplitz matrix satisfying (\ref{condab}). 
Thus, for each $\lambda$, there is a sequence $\{f_{N,\lambda}\}$ of compactly 
supported functions in $A\rtimes X$  such that $\lim_N \Lambda_{f_{N,\lambda}}(v\delta_j)=R(e_\lambda v\delta_j)$, for all $j\in \ZZ$ and $v\in X^{\otimes j}$.

Given $\epsilon >0$ and $v_i\in X^{\otimes {j_i}}$ for $i=1,\cdots,k$, let $\lambda_0$ be such that \[\|v_i-e_{\lambda_0}v_i\|<\epsilon/(2\|R\|) \text{ for all }  i=1,\cdots,k. \]
Now choose $N_0$ 
so that
\[\|\Lambda_{f_{N_0,\lambda_0}}(v_i\delta_{j_i})-R(e_{\lambda_0}v_i\delta_{j_i})\|<\epsilon/2 \text{ for }  i=1,\cdots,k.\]
Then
\begin{align*}
\|(R-\Lambda_{f_{N_0,\lambda_0}})(v_i\delta_{j_i})\|&\leq \|R((v_i-e_{\lambda_0} v_i)\delta_{j_i})\|+\|R(e_{\lambda_0} v_i\delta_{j_i})-
\Lambda_{f_{N_0,\lambda_0}}(v_i\delta_{j_i})\|\\
&<\epsilon,
\end{align*}
for all  $i=1,\cdots,k$.
\end{proof}

 \end{document}